\numberwithin{equation}{section}
\newtheorem{theorem}{Theorem}[section]
\newtheorem{lemma}{Lemma}[section]
\numberwithin{table}{section}
\DeclareMathOperator{\lcm}{lcm}
\newcommand{\sqr}[2]{{\vcenter{\vbox{\hrule height#2pt
 \hbox{\vrule width#2pt height#1pt \kern#1pt
 \vrule width#2pt}\hrule height#2pt}}}}
\newcommand{\beq}{\begin{equation}}
\newcommand{\eeq}{\end{equation}}
\newcommand{\beqar}{\begin{eqnarray}}
\newcommand{\eeqar}{\end{eqnarray}}
\def\beqars{\begin{eqnarray*}}
\def\eeqars{\end{eqnarray*}}
\newcommand{\cc}{\mathbb{C}}
\begin{document} 

\markboth{Zafer Selcuk Aygin}
{On Eisenstein series}

\title{On Eisenstein series in $M_{2k}(\Gamma_0(N))$ and their applications}

\author{Zafer Selcuk Aygin}
\address{Division of Mathematical Sciences, School of Physical and Mathematical Sciences, Nanyang Technological University, 21 Nanyang Link, Singapore 637371, Singapore} 
\email{selcukaygin@ntu.edu.sg}

\maketitle

\begin{abstract}
Let $k,N \in \mathbb{N}$ with $N$ square-free and $k>1$. We prove an orthogonal relation and use this to compute the Fourier coefficients of the Eisenstein part of any $f(z) \in M_{2k}(\Gamma_0(N))$ in terms of sum of divisors function. In particular, if $f(z) \in E_{2k}(\Gamma_0(N))$, then the computation will to yield to an expression for the Fourier coefficients of $f(z)$. Then we apply our main theorem to give formulas for convolution sums of the divisor function to extend the result by Ramanujan, and to eta quotients which yields to formulas for number of representations of integers by certain families of quadratic forms. At last we give essential results to derive similar results for modular forms in a more general setting.

\noindent {\it Keywords:} {sum of divisors function, convolution sums, theta functions, representations by quadratic forms, Eisenstein series, Dedekind eta function, eta quotients, modular forms, cusp forms, Fourier series.  }\\
\noindent Mathematics Subject Classification: 11A25, 11E20, 11E25, 11F11, 11F20, 11F27, 11F30, 11Y35.
\end{abstract}


\section{Introduction}\label{sec:1}
Let $\mathbb{N}$, $\mathbb{N}_0$, $\mathbb{Z}$, $\mathbb{Q}$, $\mathbb{C}$ and $\mathbb{H}$ denote the sets of positive integers, non-negative integers, integers, rational numbers, complex numbers and the upper half plane, respectively. Throughout the paper we let $z \in \mathbb{H}$ and  $q=e^{2 \pi i z}$. Let $\Gamma_0(N)$ ($N \in \mathbb{N}$) be  the modular subgroup defined by
\begin{align*}
\Gamma_0(N) = \left\{ \begin{bmatrix}
    a       & b  \\
    c       & d 
\end{bmatrix}  \mid  a,b,c,d\in \mathbb{Z} ,~ ad-bc = 1,~c \equiv 0 \pmod {N}
\right\} .
\end{align*} 
An element {\renewcommand*{\arraystretch}{0.5} $M= \begin{bmatrix}
    a       & b  \\
    c       & d 
\end{bmatrix} \in \Gamma_0(1)$} acts on $\mathbb{H} \cup \mathbb{Q} \cup \{ \infty \}$ by
\begin{align*}
\displaystyle M(z)=\left\{\begin{array}{ll}
	\frac{az+b}{cz+d} & \mbox{ if } z \neq \infty, \\
	\frac{a}{c} & \mbox{ if } z= \infty .
\end{array} \right.
\end{align*}

Let $k, N \in \mathbb{N}$. We write $M_k(\Gamma_0(N))$ to denote the space of modular forms of weight $k$ for  $\Gamma_0(N)$; and 
$E_k (\Gamma_0(N))$ and $S_k(\Gamma_0(N))$ to denote the subspaces of Eisenstein forms and cusp forms 
of  $M_k(\Gamma_0(N))$, respectively. 
It is known  (see for example \cite{Serre} and \cite[p. 83]{stein}) that
\begin{align}
M_k (\Gamma_0(N)) = E_k (\Gamma_0(N)) \oplus S_k(\Gamma_0(N)). \label{eq:10}
\end{align}

In the remainder of the paper, unless otherwise stated, we assume $N\in \mathbb{N}$ is square-free, $p$ always stands for prime numbers, and all the divisors considered are positive divisors.

A set of representatives of cusps of $\Gamma_0(N)$, when $N$ is square-free, is given by
\begin{align*}
R(N)=\left\{ \frac{1}{c} : c \mid N  \right\},
\end{align*}
see \cite[Proposition 2.6]{iwaniec}.

Let 
\begin{align} 
A_c=\begin{bmatrix}
    -1      & 0  \\
    c       & -1 
\end{bmatrix}, \label{eq:15}
\end{align}
then the Fourier series expansion of $f(z) \in M_k(\Gamma_0(N))$ at the cusp $ \frac{1}{c}\in \mathbb{Q} \cup \{\infty \}$ is given by the Fourier series expansion of $f(A_c^{-1}z)$ at the cusp $\infty$, see \cite[pg. 35]{Kohler}. Let the Fourier series expansion of $f(z)$ at the cusp $ \frac{1}{c} $ be given by the infinite sum
\begin{align*}
f(A_c^{-1}z)=(cz+1)^k \sum_{n\geq 0} a_c(n) e^{2 \pi i n z/h},
\end{align*}
where $h$ is the width of $\Gamma_0(N)$ at the cusp $\frac{1}{c}$. Then we use the notation $[n]_cf(z)$ to denote $a_c(n)$. We write $[n]$, instead of $[n]_0$, at the cusp $\infty ~(=1/0)$. If we say Fourier series expansion (or Fourier coefficients) without specifying the cusp, we mean the expansion (or coefficients) at the cusp $\infty$. And, for modular forms, `the first term of the Fourier expansion of $f(z)$ at cusp $\frac{1}{c}$' refers to the term $[0]_cf(z)$. We define $v_c(f)$ the order of $f(z)$ at $\frac{1}{c}$ to be the smallest $n$ such that $[n]_cf(z) \neq 0$.

The Dedekind eta function $\eta (z)$ is the holomorphic function defined on the upper half plane $\mathbb{H} = \{ z \in \mathbb{C} \mid \mbox{\rm Im}(z) >0 \}$ 
by the product formula
\begin{align*}
\eta (z) = e^{\pi i z/12} \prod_{n=1}^{\infty} (1-e^{2\pi inz}). 
\end{align*}
Let $N \in \mathbb{N}$ and $r_\delta \in \mathbb{Z}$ for all $1\leq \delta \mid N$, where the $r_\delta$ are not all zero. Let $z \in \mathbb{H}$. We define an eta quotient by the product formula
\begin{align}
f(z) = \prod_{ \delta \mid N} \eta^{r_{\delta}} ( \delta z). 
\end{align} 

For $k \in \mathbb{N}_0$ and $n \in \mathbb{N}$ we define the sum of divisors function by
\begin{align*}
\sigma_k(n)=\sum_{1 \leq d \mid n}d^k.
\end{align*}
If $n \notin \mathbb{N}$ we set $\sigma_k(n)=0$. We define the  Eisenstein series $E_{2k}(z)$ by
\begin{align} \label{1_80}
&&\displaystyle E_{2k} (z)  :=1-\frac{4k}{B_{2k}}\sum_{n=1}^{\infty} \sigma_{2k-1}(n)q^{ n },
\end{align}
where $B_{2k}$ are Bernoulli numbers, defined by the generating function
\begin{align*} 
\displaystyle \frac{x}{e^x-1}=\sum_{n=0}^\infty\frac{B_n x^n}{n!}.
\end{align*}
For all $d \mid N$ and $1 < k \in \mathbb{N}$ we have 
\begin{align*}
&& E_{2k}(dz) \in E_{2k}(\Gamma_0(N)),  
\end{align*}
see \cite[Theorem 5.8]{stein}. Noting that $\omega(d)$ stands for the number of distinct prime divisors of $d$, we state our main theorem.
\begin{theorem} \label{th:1}
Let $N\in \mathbb{N}$ be square-free and $k>1$ be an integer. Let $f(z)\in M_{2k}(\Gamma_0(N))$. Then there exists a cusp form $C_{N}(f;z) \in S_{2k}(\Gamma_0(N))$ such that 
\begin{align}
& f(z)=\sum_{d \mid N} \left( \sum_{c \mid N} \frac{(-1)^{\omega(d)+\omega(c)} }{\prod_{p \mid N} (p^{2k}-1)} \left( \frac{ N\gcd(c,d)}{c}  \right)^{2k} [0]_c f(z) \right) E_{2k}(dz) + C_{N}(f;z). \label{eq:2}
\end{align}
In particular, if $f(z)\in E_{2k}(\Gamma_0(N))$ then we have 
\begin{align}
f(z)=\sum_{d \mid N}  \left( \sum_{c \mid N} \frac{(-1)^{\omega(d)+\omega(c)} }{\prod_{p \mid N} (p^{2k}-1)} \left( \frac{ N\gcd(c,d)}{c}  \right)^{2k} [0]_c f(z) \right) E_{2k}(dz). \label{eq:3}
\end{align}
Comparing coefficients of $q^n$ on both sides of equations {\rm (\ref{eq:2})} and {\rm (\ref{eq:3})}, for $n>0$ we have
{\small \begin{align}
& [n]f(z)=\frac{-4k}{B_{4k}}\sum_{d \mid N}  \left( \sum_{c \mid N} \frac{(-1)^{\omega(d)+\omega(c)} }{\prod_{p \mid N} (p^{2k}-1)} \left( \frac{ N\gcd(c,d)}{c}  \right)^{2k} [0]_c f(z) \right) \sigma_{2k-1}(n/d) + [n] C_{N}(f;z),\label{eq:47}
\end{align}}%
in particular, if $f(z)\in E_{2k}(\Gamma_0(N))$ then for $n>0$ we have 
\begin{align}
[n]f(z)=\frac{-4k}{B_{4k}}\sum_{d \mid N}  \left( \sum_{c \mid N} \frac{(-1)^{\omega(d)+\omega(c)} }{\prod_{p \mid N} (p^{2k}-1)} \left( \frac{ N\gcd(c,d)}{c}  \right)^{2k} [0]_c f(z) \right) \sigma_{2k-1}(n/d). \label{eq:32}
\end{align}
\end{theorem}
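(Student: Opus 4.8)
The plan is to combine the Eisenstein/cusp decomposition \eqref{eq:10} with an explicit evaluation of the candidate basis Eisenstein series at every cusp, and then to invert the resulting matrix. First I would use \eqref{eq:10} to write $f(z)=E(z)+C_N(f)(z)$ with $E\in E_{2k}(\Gamma_0(N))$ and $C_N(f)\in S_{2k}(\Gamma_0(N))$. Since $k>1$ the weight $2k\ge 4$, so the dimension of $E_{2k}(\Gamma_0(N))$ equals the number of cusps, which for square-free $N$ is the number of divisors of $N$, namely $2^{\omega(N)}$, by the description $R(N)$. The $2^{\omega(N)}$ forms $\{E_{2k}(dz):d\mid N\}$ all lie in $E_{2k}(\Gamma_0(N))$, so once their linear independence is established (which will follow from the invertibility of the matrix computed below) they form a basis, and I may write $E(z)=\sum_{d\mid N}\lambda_d\,E_{2k}(dz)$ for scalars $\lambda_d$ to be determined.

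The next and central step is to compute $[0]_cE_{2k}(dz)$ for $c,d\mid N$. Because every cusp form vanishes at all cusps, $[0]_cC_N(f)=0$, so that $[0]_cf=\sum_{d\mid N}\lambda_d\,[0]_cE_{2k}(dz)$ for each $c\mid N$. To evaluate these coefficients I would rewrite $E_{2k}(d\,A_c^{-1}z)=E_{2k}\!\left(\tfrac{dz}{cz+1}\right)$ using the level-one transformation law of $E_{2k}$. Writing $g=\gcd(c,d)$, $c=gc'$, $d=gd'$ with $\gcd(c',d')=1$, and choosing $\gamma=\left(\begin{smallmatrix}d'&\beta\\ c'&\delta\end{smallmatrix}\right)\in\mathrm{SL}_2(\mathbb{Z})$ with $d'\delta-\beta c'=1$, the relation $d'c-c'd=0$ collapses the computation to $E_{2k}\!\left(\tfrac{dz}{cz+1}\right)=\left(\tfrac{cz+1}{d'}\right)^{2k}E_{2k}\!\left(\tfrac{gz-\beta}{d'}\right)$. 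Dividing by the automorphy factor $(cz+1)^{2k}$ and retaining the $z$-independent term (using $E_{2k}=1+O(q)$) yields $[0]_cE_{2k}(dz)=(d')^{-2k}=\bigl(\gcd(c,d)/d\bigr)^{2k}$. I expect this transformation step to be the main obstacle, since it requires careful tracking of the automorphy factors; note that the width of the cusp does not affect the constant term, which is all that is needed here.

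With the matrix $M_{c,d}:=[0]_cE_{2k}(dz)=\bigl(\gcd(c,d)/d\bigr)^{2k}$ in hand, it remains to solve the system $[0]_cf=\sum_d\lambda_d M_{c,d}$, i.e.\ to invert $M$. Since $N$ is square-free, every divisor factors into distinct primes and the quantities involved are multiplicative across $p\mid N$; writing each divisor componentwise as $\prod_p c_p$ with $c_p\in\{1,p\}$, the matrix becomes a Kronecker product $M=\bigotimes_{p\mid N}M^{(p)}$ with $M^{(p)}=\left(\begin{smallmatrix}1&p^{-2k}\\ 1&1\end{smallmatrix}\right)$, whose determinant $1-p^{-2k}$ is nonzero (this also furnishes the linear independence used above). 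Inverting each $2\times 2$ block and recombining, one verifies that $(M^{-1})_{d,c}=\dfrac{(-1)^{\omega(d)+\omega(c)}}{\prod_{p\mid N}(p^{2k}-1)}\left(\dfrac{N\gcd(c,d)}{c}\right)^{2k}$; it suffices to check the four single-prime entries and then multiply. Substituting $\lambda_d=\sum_{c\mid N}(M^{-1})_{d,c}\,[0]_cf$ into $E(z)=\sum_d\lambda_d E_{2k}(dz)$ produces \eqref{eq:2}, and \eqref{eq:3} is the special case $f\in E_{2k}(\Gamma_0(N))$, for which $C_N(f)=0$. Finally, comparing coefficients of $q^n$ on the two sides, using the $q$-expansion of $E_{2k}(dz)$ coming from the definition of $E_{2k}$ (and recalling $\sigma_{2k-1}(x)=0$ for $x\notin\mathbb{N}$), directly yields \eqref{eq:47} and \eqref{eq:32}.
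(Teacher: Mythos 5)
Your proposal is correct and follows essentially the same route as the paper: decompose $f$ via \eqref{eq:10} as a linear combination of the $E_{2k}(dz)$ plus a cusp form, compute $[0]_cE_{2k}(dz)=\bigl(\gcd(c,d)/d\bigr)^{2k}$ by the same level-one transformation argument that underlies the paper's Theorem \ref{th:3} and \eqref{eq:19}, and then invert the resulting matrix of constant terms. Your Kronecker-product factorization of that matrix over the primes dividing $N$ is exactly the content of the paper's orthogonality relation (Lemma \ref{lemma:1}), whose proof peels off one prime at a time in the same multiplicative fashion.
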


If $f(z) \in M_{2k}(\Gamma_0(N))$, then it is analytic at all cusps. Thus, it is possible to calculate $[0]_cf(z)$ for all $c\mid N$. That is, Theorem \ref{th:1} can be applied to any $f(z) \in M_{2k}(\Gamma_0(N))$ to obtain its Eisenstein part. In most applications of modular forms in number theory, we realized, only the expansion at infinity is considered. This approach is pretty useful, however it fails to provide general results. Consideration of expansions at other cusps, as we do in this paper, allows us to derive more general results using modular forms. As detailed in Section \ref{sec:8} similar formulas can be obtained for other modular form spaces. This can be applied to a plethora of different questions, most notably to deriving arithmetic properties of modular forms in a more general setting, see Section \ref{sec:8}.

In the next section, we compute Fourier series expansions of the modular forms $f(dz)$ where $f(z) \in M_{2k}(\Gamma_0(1))$ at the cusps $\frac{1}{c} \in \mathbb{Q}$. In Section \ref{sec:2}, we use these expansions, together with the orthogonal relation given by Lemma \ref{lemma:1}, to prove Theorem \ref{th:1}. In Sections \ref{sec:4} and \ref{sec:7}, we apply Theorem \ref{th:1}, to convolution sums of the divisor function and eta quotients, respectively. In the last section we describe how to derive statements similar to Theorem \ref{th:1} for other modular form spaces. All the theorems and corollaries stated in this paper are new, and if we fix $N$ and/or $k$, our formulas agree with the previously known formulas. We discuss the details of Sections \ref{sec:4}, \ref{sec:7} and \ref{sec:8}.

Let $a,b,l,m \in \mathbb{N}$ and let us define the convolution sum
\begin{align*}
W(a^{l},b^{m};n)= \sum_{ar+bs=n} \sigma_{l}(r) \sigma_{m}(s)
\end{align*}
for $n>0$. This convolution sum is a generalized version of the convolution sum defined by Ramanujan in \cite{19ramanujanocaf}, where he gave a formula for $W(1^{2l-1},1^{2m-1};n)$. For some history on the formulas for $W(a^l,b^m;n)$, see \cite{aygineisandconv,history2}.
In Section \ref{sec:4}, we use Theorem \ref{th:1} to give the following formula:
{\footnotesize \begin{align}
& W(a^{2l-1},b^{2m-1};n)     = \label{eq:31} \\
& \quad \frac{-kB_{2l}B_{2m}}{4lm B_{2k}}\sum_{d \mid N} \left( \sum_{c \mid N}  \frac{(-1)^{\omega(d)+{\omega(c)}}}{\prod_{p \mid N} (p^{2k}-1)} \left( \frac{ N \gcd(c,d)}{c}  \right)^{2k} \left( \frac{\gcd(a,c)}{a} \right)^{2l} \left( \frac{\gcd(b,c)}{b} \right)^{2m}  \right) \sigma_{2k-1}(n/d)\nonumber \\
& \quad +\frac{B_{2l}}{4l} \sigma_{2m-1}(n/b)  + \frac{B_{2m}}{4m}  \sigma_{2l-1}(n/a) + \frac{B_{2l}B_{2m}}{16lm} [n]C_{\lcm(a,b)}(E_{2l}(az)E_{2m}(bz);z). \nonumber
\end{align}}%
where $a,b \in \mathbb{N}$ are square-free, $l,m>1$, $k=l+m$. This formula extends the result given by Ramanujan in \cite{19ramanujanocaf}, except the cases $l=1$ or $m=1$. This is due to the complicated behaviour of weight $2$ Eisenstein series at cusps. (See \cite{aygineisandconv}, for a treatment of the case when $l=1$ and $m=1$.) At the end of Section \ref{sec:4}, we let the level to be $6$ to illustrate this formula, we then describe $C_{\lcm(a,b)}(E_{2l}(az)E_{2m}(bz);z)$ (for all $\lcm(a,b)\mid 6$) in terms of eta quotients and finally we let $a,b=1$ to deduce that  (\ref{eq:31}) agrees with the formula given by Ramanujan.

Let $m \in \mathbb{N}$, $r_i \in \mathbb{N}_0$, and $a_i \in \mathbb{N}$ for all $1 \leq i \leq m$. Let 
\begin{align*}
N(a_1^{r_1},a_2^{r_2},\ldots,a_{m}^{r_m}; n)
\end{align*}
denote the number of representations of $n$ by the quadratic form
\begin{align*}
\sum_{i=1}^{m} \sum_{j=1}^{r_i} a_{i}x_j^2.
\end{align*}
In Section \ref{sec:7}, we apply Theorem \ref{th:1} to obtain the following formula for representations of an integer by a family of quadratic forms with odd square-free coefficients:
{\scriptsize \begin{align}
& N(1^{8r_1},\ldots,\delta^{8r_\delta},\ldots,N^{8r_N};n) =  \label{eq:40} \\
& \qquad  \frac{4k}{B_{2k}} \sum_{d \mid  N} \left( \sum_{c \mid  N} \frac{(-1)^{\omega(d)+\omega(c)+1} }{\prod_{p \mid 2N} (p^{2k}-1)} \left( \frac{ 2 N\gcd(c,d)}{c}  \right)^{2k} \prod_{ \delta \mid N} \left(\frac{\gcd(c,\delta)}{\delta}\right)^{4 r_{\delta}}  \right) \sigma_{2k-1}(n/{2d}) \nonumber \\
&\qquad + \frac{(-1)^n4k}{B_{2k}}  \sum_{ d \mid N} \left( \sum_{c \mid  N} \frac{(-1)^{\omega(d)+\omega(c)} }{\prod_{p \mid 2N} (p^{2k}-1)} \left( \frac{ N\gcd(c,d)}{c}  \right)^{2k} \prod_{ \delta \mid N} \left(\frac{\gcd(c,\delta)}{\delta}\right)^{4 r_{\delta}}  \right) \sigma_{2k-1}(n/d) \nonumber \\
& \qquad + (-1)^n [n]C_{2N}\left(\prod_{ \delta \mid N} g^{8r_\delta}(\delta z);z \right). \nonumber
\end{align}}%
Then we compare our formula with recent formulas from Cooper et al. \cite{cooperrmf} and classical results by Ramanujan \cite{cooper,mordell,19ramanujanocaf}. We also illustrate our formula with $2N=6$ and give a description of $C_{6}\left(g(z)^{8 r_1} g(3 z)^{8 r_3};z\right)$ in terms of eta quotients. 

One can use (\ref{eq:32}) to determine Fourier coefficients of eta quotients in $E_{2k}(\Gamma_0(N))$. In \cite{alacaaygin2015} we take advantage of a similar idea, for fixed weight and level, to determine Fourier coefficients of certain families of weight $2$ eta quotients. It seems, if $N \in \mathbb{N}$ square-free and the weight is greater than $2$, then there are only two eta quotients in $E_{2k}(\Gamma_0(N))$. We try to explain the reason for this in Section \ref{sec:7}.

In Section \ref{sec:8} we find the equivalent of (\ref{eq:19}) for Eisenstein series those twisted by Dirichlet characters. The rest of the section is dedicated to explain how to apply these methods to modular form spaces in a more general setting.

\section{Preliminary results}\label{sec:5}

Let $f(z) \in M_{2k}(\Gamma_0(1))$. In this section we state and prove Theorem \ref{th:3}, which gives the Fourier series expansions of $f(dz)\in M_{2k}(\Gamma_0(d))$ at cusps $\frac{1}{c} \in \mathbb{Q}$. We use Theorem \ref{th:3} to compute first terms of Fourier series expansions of $E_{2k}(dz)$ at the cusps $1/c$. Together with the fact that the first terms of Fourier series expansions of cusp forms are always $0$, Theorem \ref{th:3} will be used to prove Theorem \ref{th:1}. Then we give a `Sturm bound' for Eisenstein series and at last we give an interesting relationship between the Fourier coefficients of $f(z) \in E_{2k}(\Gamma_0(N))$ and $v_c(f)$.

\begin{theorem} \label{th:3}  
Let $k \in \mathbb{N}$. Let $f(z) \in M_{2k}(\Gamma_0(1))$, with the Fourier series expansion given by
\begin{align*}
f(z)=\sum_{n \geq 0} a_n q^n.
\end{align*}
Then for $d \in \mathbb{N}$, the Fourier series expansion of $f_d(z)=f(dz)$ at cusp $1/c\in \mathbb{Q}$ is given by
\begin{align*}
 f_{d}(A_c^{-1}z)=\Big(\frac{g}{d}\Big)^{2k}(cz+1)^{2k} f\Big(\frac{g^2}{d}z+\frac{y g}{d}\Big)=\Big(\frac{g}{d}\Big)^{2k}(cz+1)^{2k}\sum_{n \geq 0} a_n q_c^n,
\end{align*}
where $g=\gcd(d, c)$,  $y$ is some integer, $A_c$ is the matrix given by {\em (\ref{eq:15})} and $\displaystyle q_c=e^{2\pi i \left(\frac{g^2}{d}z+\frac{y g}{d}\right)}$.
\end{theorem}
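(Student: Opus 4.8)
The plan is to reduce the statement to the transformation law of $f$ under $\Gamma_0(1)=\mathrm{SL}_2(\mathbb{Z})$ by factoring a single integer matrix. First I would record that, since $A_c=\begin{bmatrix}-1&0\\c&-1\end{bmatrix}$ has determinant $1$, its inverse is $A_c^{-1}=\begin{bmatrix}-1&0\\-c&-1\end{bmatrix}$, so that $A_c^{-1}z=\frac{z}{cz+1}$ and hence $f_d(A_c^{-1}z)=f\!\left(\frac{dz}{cz+1}\right)$. Writing $f_d(z)=f(dz)$ as $f$ evaluated at $\begin{bmatrix}d&0\\0&1\end{bmatrix}z$, the quantity to understand is $f$ evaluated at $\begin{bmatrix}d&0\\0&1\end{bmatrix}A_c^{-1}z$, where $\begin{bmatrix}d&0\\0&1\end{bmatrix}A_c^{-1}=\begin{bmatrix}-d&0\\-c&-1\end{bmatrix}$.

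The heart of the argument is a factorization of this matrix as $M\cdot B$ with $M\in\mathrm{SL}_2(\mathbb{Z})$ and $B$ upper triangular. Set $g=\gcd(c,d)$ and write $c=gc'$, $d=gd'$ with $\gcd(c',d')=1$. By B\'ezout I would choose an integer $y$ with $c'y\equiv 1\pmod{d'}$, and set $d_M=\frac{c'y-1}{d'}\in\mathbb{Z}$, so that $c'y-d'd_M=1$. A direct multiplication then verifies
\begin{align*}
\begin{bmatrix}-d&0\\-c&-1\end{bmatrix}
=\underbrace{\begin{bmatrix}-d'&y\\-c'&d_M\end{bmatrix}}_{=:M}\;
\underbrace{\begin{bmatrix}g&y\\0&d'\end{bmatrix}}_{=:B},
\end{align*}
where $\det M=-d'd_M+c'y=1$, so $M\in\mathrm{SL}_2(\mathbb{Z})$, while $B$ induces the map $z\mapsto \frac{gz+y}{d'}=\frac{g^2}{d}z+\frac{yg}{d}=:w$.

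Finally I would apply modularity: since $f\in M_{2k}(\Gamma_0(1))$ and $M\in\mathrm{SL}_2(\mathbb{Z})$, we have $f(Mw)=(-c'w+d_M)^{2k}f(w)$. Substituting $w=\frac{gz+y}{d'}$ and using $c'y-d'd_M=1$ together with $c'g=c$ gives $-c'w+d_M=-\frac{cz+1}{d'}=-\frac{g}{d}(cz+1)$; since $2k$ is even the sign disappears and the automorphy factor becomes $\left(\frac{g}{d}\right)^{2k}(cz+1)^{2k}$. Because the matrix identity yields $\frac{dz}{cz+1}=M(w)$, we conclude $f_d(A_c^{-1}z)=\left(\frac{g}{d}\right)^{2k}(cz+1)^{2k}f(w)$, and expanding $f$ via its Fourier series $f=\sum_{n\ge0}a_nq^n$ with $q_c=e^{2\pi i w}$ yields the claim. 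The only real obstacle is guessing the correct triangular factor $B$ and the matching $M$; once the factorization is written down, everything reduces to the coprimality relation $c'y-d'd_M=1$ and the parity of the weight, both routine. I expect no analytic difficulty, since $f$ is holomorphic at $\infty$, so term-by-term substitution into the Fourier series is immediate.
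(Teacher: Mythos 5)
Your proposal is correct and is essentially the paper's own argument: your factorization $\begin{bmatrix}-d&0\\-c&-1\end{bmatrix}=MB$ with $M=\begin{bmatrix}-d/g&y\\-c/g&d_M\end{bmatrix}$ and $B=\begin{bmatrix}g&y\\0&d/g\end{bmatrix}$ is exactly the paper's insertion of $LL^{-1}$ with $L=M$ and $L^{-1}\gamma=B$, followed by the same use of the weight-$2k$ automorphy factor and the evenness of the weight to discard the sign.
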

\begin{proof}
The Fourier series expansion of $f_{d}(z)$ at the cusp $ 1/c $ is given by 
the Fourier series expansion of $f_{d}(A_c^{-1}z)$ at the cusp $\infty$. We have
\begin{align*}
f_{d}(A_c^{-1}z) =f_{d}\left(\frac{-z}{-cz-1}\right) = f\left(\frac{-dz}{-cz-1}\right) = f(\gamma z),
\end{align*}
where $\gamma = \begin{bmatrix}
    -d       & 0  \\
    -c       & -1 
\end{bmatrix}$. 
As $\gcd(d/g,c/g)=1$, there exist  $y,v\in \mathbb{Z}$ such that $\displaystyle \frac{-dv}{g}+\frac{cy}{g}=1$. 
Thus $L := \begin{bmatrix}
    -d/g       & y  \\
     -c/g     & v 
\end{bmatrix} \in SL_2(\mathbb{Z})$. 
Then for $k \geq 1$, we have
\begin{align*}
  f_{d}(A_c^{-1}z) =f(L L^{-1} \gamma z)&=\Big(-c\Big(\frac{(-vd +cy) z +y}{d}\Big) + v \Big)^{2k} f\Big( \frac{(-vd +cy) z +y}{d/g}\Big)\\
 &=(g/d)^{2k}\Big(c\frac{vd-cy}{g}z + \frac{vd-cy}{g} \Big)^{2k} f\Big( \frac{g^2 z + yg}{d}\Big)\\
 &=(g/d)^{2k}(cz + 1 )^{2k} f \Big( \frac{g^2 }{d}z + \frac{yg}{d}\Big),
\end{align*}
which completes the proof. Note that, the value of $\displaystyle e^{2 \pi i \frac{yg}{d} }$ is independent of choice of $y$.
\end{proof}

Noting that the width of $\Gamma_0(N)$ at the cusp $\frac{1}{c}$ is $\frac{N}{c}$, it follows from Theorem \ref{th:3} and (\ref{1_80}) that the terms of the Fourier coefficients of $E_{2k}(dz) \in M_{2k}(\Gamma_0(N))$ at the cusp $1/c$ ($c \mid N$) are
\begin{align}
 [0]_{c}E_{2k}(dz)=&\left(\frac{\gcd(c,d)}{d}\right)^{2k} , \label{eq:19}\\
 [n]_{c}E_{2k}(dz)=&\frac{-4k}{B_{2k}} e^{ \frac{2 \pi i ycn}{\gcd(c,d)N}} \left(\frac{\gcd(c,d)}{d}\right)^{2k}  \sigma_{2k-1}\left(\frac{cd}{\gcd(c,d)^2N} n \right) \mbox{, ($n\geq 1$)}, \label{eq:34}
\end{align}
for all $k > 1$, $d \in \mathbb{N}$. 

Let $k>1$ be an integer, $N\in \mathbb{N}$ be square-free and $p_1$ be the smallest prime that divides $N$. Then, using (\ref{eq:34}), it is not hard to see that
\begin{align}
0 = [n]_c \sum_{d \mid N} a_d E_{2k}(dz), \mbox{ $\displaystyle n \leq \frac{N}{p_1}$}, \label{eq:13}
\end{align}
if and only if $a_d=0$ for all $d \mid N$. This gives us the following theorem, which could be viewed as a Sturm Theorem for Eisenstein forms. Note that this bound is much smaller than the Sturm bound, which is to be expected.
\begin{theorem} \label{th:5}
Let $k>1$ be an integer, $N\in \mathbb{N}$ be square-free and $p_1$ be the smallest prime that divides $N$. Let $f(z) \in E_{2k}(\Gamma_0(N))$ be a non-zero function.  Then for all $c \mid N$, we have
$ \displaystyle v_{c}(f) \leq \frac{N}{p_1}.$
\end{theorem}

To stress the intimate relationship between a modular form and its orders of zeros at cusps we note the following relation. Let $N\in \mathbb{N}$ be square-free and $k>1$ be an integer, and $f(z)=\sum_{d \mid N}a_d E_{2k}(dz) \in E_{2k}(\Gamma_0(N))$. Then from (\ref{eq:34}), we deduce that if $v_c(f)>1$ then $a_{N/c}=0$.

\section{Proof of Theorem \ref{th:1}}\label{sec:2}
Let $N\in \mathbb{N}$ be square-free and $k>1$ be an integer. Assume that $f(z)\in M_{2k}(\Gamma_0(N))$. By \cite[Theorem 5.9]{stein} and (\ref{eq:10}), we have 
\begin{align*}
f(z)=\sum_{d \mid N} a_d E_{2k}(dz) + C_{N}(f;z), 
\end{align*}
for some $a_d \in \mathbb{C}$ and $C_{N}(f;z) \in S_{2k}(\Gamma_0(N))$. By definition the first terms of Fourier series expansions of cusp forms are always $0$. Then by (\ref{eq:19}), for each $c \mid N$, we have
\begin{align}
& [0]_c f(z)= \sum_{d \mid N} a_d  \left(\frac{\gcd(c,d)}{d}\right)^{2k} +0. \label{eq:11}
\end{align}
In Lemma \ref{lemma:1} below we give an orthogonal relation, which is useful for computing the inverse of the coefficient matrix of system of linear equations given by (\ref{eq:11}). Then we use this inverse matrix obtained to compute $a_d$ in terms of $[0]_c f(z)$. This completes the proof of Theorem \ref{th:1}.
\begin{lemma} {\label{lemma:1}}
Let $N\in \mathbb{N}$ be square-free and $k>1$ be an integer. Then for all $c,d \mid N$, we have
\begin{align*}
\displaystyle \sum_{t \mid N} (-1)^{\omega(t)+\omega(c)} \left( \frac{N\gcd(c,t)}{t} \cdot  \frac{\gcd(t,d)}{d} \right)^{2k}=\left\{\begin{array}{ll}
\displaystyle {\displaystyle \prod_{p \mid N} (p^{2k}-1)} & \mbox{ if } c=d , \\
0  & \mbox{ if } c \neq d .
\end{array} \right.
\end{align*}
\end{lemma}
\begin{proof} 
Let $N\in \mathbb{N}$ be square-free, $k>1$ be an integer and $c,d \mid N$. Let 
\begin{align*}
\displaystyle T(c,d;N)= \sum_{t \mid N} (-1)^{\omega(t)+\omega(c)} \left( \frac{N\gcd(c,t)}{t} \cdot \frac{\gcd(t,d)}{d} \right)^{2k}.
\end{align*}
Let us fix a prime $p \mid N$. Then for all $c,d \mid N$, we have
\begin{align}
T(c,d;N)=&\sum_{\substack{t \mid N/p}} (-1)^{\omega(t)+\omega(c)} \left( \frac{N\gcd(c,t)}{t} \cdot \frac{\gcd(t,d)}{d} \right)^{2k}\nonumber \\
& ~~+\sum_{\substack{t \mid N/p}} (-1)^{\omega(tp)+\omega(c)} \left( \frac{N\gcd(c,tp)}{tp} \cdot \frac{\gcd(tp,d)}{d} \right)^{2k}\nonumber \\
&=(p^{2k}-(\gcd(c,p)\gcd(d,p))^{2k})T(c,d;N/p), \label{eq:17}
\end{align}
where in the last step we use the equation (for $p \nmid t$)
\begin{align*}
 \frac{\gcd(c,tp)}{tp}\cdot \frac{\gcd(tp,d)}{d} = \frac{\gcd(c,p)\gcd(d,p)}{p} \cdot \frac{\gcd(c,t)}{t }\cdot \frac{\gcd(t,d)}{d}.
\end{align*}

If $c \neq d$, then there exists a prime $p \mid N$, such that  $\gcd(c,p)\gcd(d,p)=p$. Then by (\ref{eq:17}), we have $T(c,d;N)=0$.

For $c=d$, by (\ref{eq:17}), we have
\begin{align*}
T(c,c;N) &= T(c,c;1) \prod_{p \mid N} (p^{2k}-\gcd(c,p)^{4k})\\
&= (-1)^{\omega(c)} \left( \frac{1}{ c} \right)^{2k} \prod_{p \mid N} (p^{2k}-\gcd(c,p)^{4k})\\
&= \left( \frac{1}{ c} \right)^{2k} \prod_{p \mid c} (p^{4k}-p^{2k}) \prod_{p \mid N/c} (p^{2k}-1)\\
&=  \prod_{p \mid N} (p^{2k}-1).
\end{align*}

\end{proof} 

\section{Application: Convolution sums of the divisor function} \label{sec:4}

Let $N\in \mathbb{N}$ be square-free, $l,m >1$ be integers and $a,b \in \mathbb{N}$ be such that $\lcm(a,b) \mid N$. Then we have
\begin{align*}
E_{2l}(az)E_{2m}(bz) \in M_{2(l+m)}(\Gamma_0(N)).
\end{align*}
Thus, combining Theorem \ref{th:1} and (\ref{eq:19}), we obtain the following theorem.
\begin{theorem}\label{th:7}
Let $N\in \mathbb{N}$ be square-free. Let $l,m >1$ be integers and $a,b \in \mathbb{N}$ be such that $\lcm(a,b) \mid N$. Then there exists a cusp form $C_{N}(E_{2l}(az)E_{2m}(bz);z) \in S_{2k}(\Gamma_0(N))$ such that 
{\scriptsize \begin{align}
E_{2l}(az)E_{2m}(bz) = &\sum_{d \mid N} \left( \sum_{c \mid N}  \frac{(-1)^{\omega(d)+{\omega(c)}}}{\prod_{p \mid N} (p^{2k}-1)} \left( \frac{ N \gcd(c,d)}{c}  \right)^{2k} \left( \frac{\gcd(a,c)}{a} \right)^{2l} \left( \frac{\gcd(b,c)}{b} \right)^{2m}  \right) E_{2k}(dz) \nonumber \\
&+ C_{N}(E_{2l}(az)E_{2m}(bz);z), \label{eq:4}
\end{align} }%
where $k=l+m$.
Comparing coefficients of $q^n$ {\rm ($n>0$)} in {\rm (\ref{eq:4})} we obtain the formula for $W(a^{2l-1},b^{2m-1};n) $ given by {\rm (\ref{eq:31})}.
\end{theorem}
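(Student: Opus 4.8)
The plan is to apply Theorem~\ref{th:1} directly to $f(z)=E_{2l}(az)E_{2m}(bz)$, so that the only genuine work is to identify its constant terms $[0]_cf(z)$ at each cusp $\frac1c$ with $c\mid N$, and then to extract the $q$-expansion identity. Since $a,b\mid\lcm(a,b)\mid N$, the factor $E_{2l}(az)$ lies in $E_{2l}(\Gamma_0(N))$ and $E_{2m}(bz)$ in $E_{2m}(\Gamma_0(N))$, so their product is a modular form of weight $2k=2(l+m)$ on $\Gamma_0(N)$, as asserted just before the statement.

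Next I would compute the leading coefficients at the cusps. The key point is that the constant term of the Fourier expansion of a product at a cusp is the product of the constant terms of the factors. Concretely, writing
\begin{align*}
E_{2l}(aA_c^{-1}z)&=(cz+1)^{2l}\bigl([0]_cE_{2l}(az)+O(e^{-\epsilon\,\mathrm{Im}\,z})\bigr),\\
E_{2m}(bA_c^{-1}z)&=(cz+1)^{2m}\bigl([0]_cE_{2m}(bz)+O(e^{-\epsilon\,\mathrm{Im}\,z})\bigr),
\end{align*}
multiplying, and reading off the coefficient of $(cz+1)^{2k}$, gives $[0]_c(E_{2l}(az)E_{2m}(bz))=[0]_cE_{2l}(az)\cdot[0]_cE_{2m}(bz)$. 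By (\ref{eq:19}) this equals $\bigl(\frac{\gcd(a,c)}{a}\bigr)^{2l}\bigl(\frac{\gcd(b,c)}{b}\bigr)^{2m}$. Substituting this into the first formula of Theorem~\ref{th:1} yields (\ref{eq:4}) at once, with $C_N(E_{2l}(az)E_{2m}(bz))$ the cusp form supplied by that theorem.

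For the convolution formula I would compare coefficients of $q^n$ ($n>0$) on the two sides of (\ref{eq:4}). Expanding the left-hand side at $\infty$ via (\ref{1_80}),
\begin{align*}
E_{2l}(az)E_{2m}(bz)=1&-\frac{4l}{B_{2l}}\sum_{r\ge1}\sigma_{2l-1}(r)q^{ar}-\frac{4m}{B_{2m}}\sum_{s\ge1}\sigma_{2m-1}(s)q^{bs}\\
&+\frac{16lm}{B_{2l}B_{2m}}\sum_{r,s\ge1}\sigma_{2l-1}(r)\sigma_{2m-1}(s)q^{ar+bs},
\end{align*}
so the coefficient of $q^n$ is $-\frac{4l}{B_{2l}}\sigma_{2l-1}(n/a)-\frac{4m}{B_{2m}}\sigma_{2m-1}(n/b)+\frac{16lm}{B_{2l}B_{2m}}W(a^{2l-1},b^{2m-1};n)$. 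On the right-hand side I would use $[n]E_{2k}(dz)=-\frac{4k}{B_{2k}}\sigma_{2k-1}(n/d)$. Equating the two expressions, solving for $W(a^{2l-1},b^{2m-1};n)$, and multiplying through by $\frac{B_{2l}B_{2m}}{16lm}$ produces (\ref{eq:31}); choosing the minimal level $N=\lcm(a,b)$ (legitimate because $W$ is independent of $N$) makes the cusp-form term read $C_{\lcm(a,b)}(\,\cdots)$ exactly as written.

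The main obstacle is the product-of-leading-terms step. The subtlety is that the two factors are expanded in uniformizers of different widths at $\frac1c$ (for $E_{2l}(az)$ the width involves $\gcd(a,c)^2/a$ and for $E_{2m}(bz)$ it involves $\gcd(b,c)^2/b$, by Theorem~\ref{th:3}), so one cannot simply multiply two $q_c$-series term by term. However, the constant term of each factor is the limit of $(cz+1)^{-2l}E_{2l}(aA_c^{-1}z)$ (resp.\ $(cz+1)^{-2m}E_{2m}(bA_c^{-1}z)$) as $\mathrm{Im}\,z\to\infty$, and for this limit the differing widths are irrelevant; the two leading terms therefore multiply cleanly, which is all that is needed.
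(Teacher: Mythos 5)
Your proposal is correct and follows essentially the same route as the paper, which simply combines Theorem~\ref{th:1} with (\ref{eq:19}) applied to $f(z)=E_{2l}(az)E_{2m}(bz)$ and then compares $q^n$-coefficients; your extra care about the constant term of a product at a cusp being the product of constant terms (despite differing widths) only fills in a step the paper leaves implicit.
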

Below we give the description of cusp forms in $S_{2k}(\Gamma_0(6))$ for all $k>1$ in terms of eta quotients. Let $k,i \in \mathbb{N}$, and let us define the following eta quotient
\begin{align}
S(2k,6,i;z)=\left( \frac{\eta^6(z)\eta(6z)}{\eta^3(2z)\eta^2(3z)} \right)^{2k} \left( \frac{\eta(2z)\eta^5(6z)}{\eta^5(z)\eta(3z)} \right)^i \left( \frac{\eta^{13}(2z)\eta^{11}(3z)}{\eta^{17}(z)\eta^{7}(6z)} \right). \label{eq:28}
\end{align}
We note that the order of $S(2k,6,i;z)$ at $\infty$ is $i$, i.e. we have  $[n]S(2k,6,i;z)=0$ when $n<i$. This ensures linear independence and lower triangular shape of coefficients of these eta quotients. By \cite[Corollary 2.3, p. 37]{Kohler} and \cite[Theorem 1.64]{onoweb} we have $S(2k,6,i;z) \in S_{2k}(\Gamma_0(6))$ for all $k>1$ and $1\leq i \leq 2k-3$. Since the dimension of $S_{2k}(\Gamma_0(6))$ is $2k-3$ for all $k>1$, and $ S(2k,6,i;z)$  ($1\leq i \leq 2k-3$) are linearly independent, we obtain a basis for $S_{2k}(\Gamma_0(6))$. 
 
\begin{theorem} \label{th:6}
Let $k>1$ be an integer. Then the set of eta quotients 
\begin{align*}
\{ S(2k,6,i;z):  1\leq i \leq 2k-3   \}
\end{align*}
form a basis for $S_{2k}(\Gamma_0(6))$, i.e. letting $s(z) \in S_{2k}(\Gamma_0(6))$ we have
{ \begin{align*}
s(z)= \sum_{i=1}^{2k-3} b_i S(2k,6,i;z),
\end{align*}}%
where $b_i$ can be calculated recursively.
\end{theorem}
Theorem \ref{th:6} gives an example of a family of modular form spaces which is generated by eta quotients, a question raised by Ono in \cite{onoweb} and answered by Rouse and Webb in \cite{rouse}. Now we can express $C_{6}(E_{2l}(az)E_{2m}(bz);z)$ for $\lcm(a,b) \mid 6$ as linear combinations of $S(2k,6,i;z)$.


Below we give two beautiful examples. We first let $a,b=1$ in  (\ref{eq:31}). Then we have 
\begin{align}
 W(1^{2l-1},1^{2m-1};n)    =& \frac{-k B_{2l}B_{2m}}{4lm B_{2k}} \sigma_{2k-1}(n)
+ \frac{B_{2l}}{4l} \sigma_{2m-1}(n) + \frac{B_{2m}}{4m}  \sigma_{2l-1}(n) \nonumber \\
 &  + \frac{B_{2l}B_{2m}}{16lm} \sum_{i=1}^{2k-3} [n] b_i S(2k,6,i;z) \label{eq:21}
\end{align}
which, for $l,m>1$, is a similar expression to the one given by Ramanujan in \cite{19ramanujanocaf}. In (\ref{eq:21}) the cusp part vanishes when $l=2,m=2$; $l=2,m=3$; $l=2,m=5$; $l=3,m=4$, in agreement with Ramanujan's results in \cite{19ramanujanocaf}. 

Second, we let $a=2,b=3$ in (\ref{eq:31}), and $1<l=m$. Then we have
{  \begin{align}
W(2^{2l-1},3^{2l-1};n) =&  -\frac{B_{2l}^2}{B_{4l}}\cdot \frac{   \sigma_{4l-1}(n) + 2^{2l}\sigma_{4l-1}(n/2) +3^{2l} \sigma_{4l-1}(n/3) +  6^{2l}\sigma_{4l-1}(n/6) }{2l (3^{2l}+1)(2^{2l}+1)} \nonumber \\
&+\frac{B_{2l}}{4l} \sigma_{2l-1}(n/3)   + \frac{B_{2l}}{4l}  \sigma_{2l-1}(n/2) + \frac{B^2_{2l}}{16l^2} \sum_{i=1}^{4l-3} [n] b_i S(4l,6,i;z). \label{eq:26}
\end{align} }%
Replace $n$ by $1$ in (\ref{eq:26}) to see that the cusp part of (\ref{eq:26}) never vanishes. 

\section{Application: On eta quotients and representations by quadratic forms} \label{sec:7}
In this section we apply Theorem \ref{th:1} to eta quotients in $M_{2k}(\Gamma_0(N))$. We start with a special eta quotient which has applications to representations by quadratic forms. Then in the second part we give a general statement concerning eta quotients. We then discuss when these eta quotients has no cusp form component.

Let $N\in \mathbb{N}$ be an odd square-free number, $k>1$ and let $\displaystyle g(z)=\frac{\eta^2(z)}{\eta(2z)}$. Then for $r_\delta \in \mathbb{N}_0$ ($\delta \mid N$), not all zeros, we have 
\begin{align}
\prod_{ \delta \mid N} g^{8r_\delta}(\delta z) \in M_{2k}(\Gamma_0(2N)), \label{eq:35}
\end{align} 
where $k = 2 \sum_{ \delta \mid N} r_\delta > 0$. $2N$ is square-free, thus we can apply Theorem \ref{th:1} to (\ref{eq:35}).
\begin{theorem} \label{th:8}
Let $N \in \mathbb{N}$ be an odd square-free number. Let $r_\delta \in \mathbb{N}_0$ ($\delta \mid N$, not all zeros), and $k=2 \sum_{ \delta \mid N} r_\delta > 0$. Then there exists a cusp form $C_{2N}\left(\prod_{ \delta \mid N} g^{8r_\delta}(\delta z);z \right) \in S_{2k}(\Gamma_0(2N))$, such that
{ \begin{align*}
\prod_{ \delta \mid N} g^{8r_\delta}(\delta z) =& \sum_{d \mid 2 N} \left( \sum_{c \mid  N} \frac{(-1)^{\omega(d)+\omega(2c)} }{\prod_{p \mid 2N} (p^{2k}-1)} \left( \frac{  N\gcd(2c,d)}{c}  \right)^{2k} \prod_{ \delta \mid N} \left(\frac{\gcd(2c,\delta)}{\delta}\right)^{4 r_{\delta}}  \right) E_{2k}(dz) \nonumber \\
&+ C_{2N}\left(\prod_{ \delta \mid N} g^{8r_\delta}(\delta z);z \right).
\end{align*}}%
\end{theorem}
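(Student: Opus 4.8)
The plan is to apply Theorem \ref{th:1} verbatim at level $2N$ to the form $f(z) = \prod_{\delta \mid N} g^{8r_\delta}(\delta z)$. By (\ref{eq:35}) we have $f(z) \in M_{2k}(\Gamma_0(2N))$, and since $N$ is odd and square-free the modulus $2N$ is again square-free, while $k = 2\sum_{\delta} r_\delta \geq 2 > 1$; so all hypotheses of Theorem \ref{th:1} hold with $N$ replaced by $2N$. Substituting into (\ref{eq:2}) immediately writes $f(z)$ as $\sum_{d \mid 2N}(\cdots)E_{2k}(dz) + C_{2N}(f)$ with coefficients built from the numbers $[0]_c f(z)$ for $c \mid 2N$. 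Hence the whole theorem reduces to (i) evaluating these first Fourier coefficients at every cusp $1/c$ of $\Gamma_0(2N)$, and (ii) reindexing the resulting double sum into the stated shape.

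For step (i) I would exploit the multiplicativity of the constant term across the eta-quotient factors. Transforming by $A_c^{-1}$ multiplies each factor $g(\delta z)$ by its own automorphy factor and a $q_c$-series, and the product of the automorphy factors is exactly $(cz+1)^{2k}$; comparing $q_c^0$-terms then gives $[0]_c f(z) = \prod_{\delta \mid N}\big([0]_c g(\delta z)\big)^{8r_\delta}$, with no phase-factor ambiguity of the type appearing in (\ref{eq:34}) since those phases are trivial at $n=0$. This reduces everything to a single factor $g(\delta z) = \eta^2(\delta z)/\eta(2\delta z)$, i.e.\ the theta function $\theta_4(\delta z) = \varphi(-q^\delta)$.

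For this single factor I would invoke the Ligozat/Martin order-of-vanishing formula for eta quotients at the cusp $1/c$ (with $c \mid 2N$). A short computation shows that the order of $g(\delta z)$ at $1/c$ is strictly positive when $c$ is odd, so $[0]_c g(\delta z) = 0$ there, whereas for an even divisor $c = 2c_1$ (with $c_1 \mid N$) the order is exactly $0$, so the leading term really is the $n=0$ term and $[0]_{2c_1} g(\delta z) \neq 0$. Its precise value I would pin down using the theta (or eta) transformation law; the outcome is $[0]_{2c_1} g(\delta z)^{8} = (\gcd(2c_1,\delta)/\delta)^{4}$, whence $[0]_{c}f(z) = 0$ for odd $c$ and $[0]_{2c_1} f(z) = \prod_{\delta \mid N}\big(\gcd(2c_1,\delta)/\delta\big)^{4r_\delta}$ for even $c = 2c_1$.

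Step (ii) is then pure bookkeeping. In the inner sum of (\ref{eq:2}) over $c \mid 2N$ the odd divisors drop out because $[0]_c f = 0$, and I would reindex the surviving even divisors by writing $c = 2c_1$ with $c_1 \mid N$; using $\tfrac{2N\gcd(2c_1,d)}{2c_1} = \tfrac{N\gcd(2c_1,d)}{c_1}$ and $\omega(c) = \omega(2c_1)$ turns each surviving term into the displayed summand, the denominator $\prod_{p \mid 2N}(p^{2k}-1)$ being inherited directly from Theorem \ref{th:1}. The main obstacle is the cusp computation of the third paragraph: because $\theta_4$ has half-integral weight, each factor carries a square-root automorphy factor and a possibly ambiguous root of unity, so care is needed to check that these become harmless after taking $8r_\delta$-th powers and that the eta quotient has no polar term at any cusp, so that the $n=0$ coefficient is genuinely the leading one. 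Everything after that is a direct substitution into Theorem \ref{th:1}.
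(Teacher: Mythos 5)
Your proposal is correct and follows essentially the same route as the paper: apply Theorem \ref{th:1} at the square-free level $2N$, show that $[0]_c\prod_{\delta\mid N}g^{8r_\delta}(\delta z)$ vanishes for odd $c$ and equals $\prod_{\delta\mid N}(\gcd(c,\delta)/\delta)^{4r_\delta}$ for even $c$, and reindex the surviving even divisors as $c=2c_1$ with $c_1\mid N$. The only difference is that the paper simply cites K\"ohler's Proposition 2.1 for the constant terms of $g^8(\delta z)$ at the cusps, whereas you rederive them via the Ligozat order formula and the theta transformation law; your order computation (positive order at odd cusps, order zero at even cusps) and the resulting values agree with the paper's.
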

\begin{proof} Let $N \in \mathbb{N}$ be an odd square-free number, $r_\delta \in \mathbb{N}_0$ ($\delta \mid N$, not all zeros), and $k=2 \sum_{ \delta \mid N} r_\delta > 0$. We use \cite[Proposition 2.1]{Kohler} to compute
\begin{align*}
[0]_c g^{8}(\delta z)=
 \begin{cases} 
     \left(\frac{\gcd(c,\delta)}{\delta}\right)^4  & \mbox{ if $2 \mid c$} \\
     0  & \mbox{ if $2 \nmid c$}
   \end{cases}.
\end{align*}
Thus we have
\begin{align*}
[0]_c \prod_{ \delta \mid N} g^{8r_\delta}(\delta z) =\begin{cases} 
       \prod_{ \delta \mid N} \left(\frac{\gcd(c,\delta)}{\delta}\right)^{4 r_{\delta}} & \mbox{ if $2 \mid c$} \\
    0  & \mbox{ if $2 \nmid c$}
   \end{cases}.
\end{align*}
The result follows from Theorem \ref{th:1}, by putting the values of $\displaystyle [0]_c \prod_{ \delta \mid N} g^{8r_\delta}(\delta z)$ in (\ref{eq:2}).
\end{proof}
Below we apply Theorem \ref{th:8}, to give formulas concerning representations by quadratic forms. Following Ramanujan's notation, let us define
\begin{align*}
\varphi(z)=\sum_{n=-\infty}^{\infty} q^{n^2}.
\end{align*}
Then we have
\begin{align*}
\sum_{n=0}^{\infty} N(a_1^{r_1},a_2^{r_2},\ldots,a_{m}^{r_m}; n)q^n=\prod_{i=1}^m \varphi^{r_i}(a_i z).
\end{align*}
On the other hand, by \cite[(1.3.13)]{SprtRmnj},  we have $\varphi(z)=g(z+1/2)$,  (or equivalently $\varphi(q)=g(-q)$). Replacing $z$ by $z+1/2$ in Theorem \ref{th:8}, we have the following statement.

\begin{theorem} \label{th:9}
 Let $N \in \mathbb{N}$ be an odd square-free number, $r_\delta \in \mathbb{N}_0$, $\delta \mid N$, not all zeros, and $k=2 \sum_{ \delta \mid N} r_\delta > 0$. Then we have
{\small \begin{align}
\prod_{ \delta \mid N} \varphi^{8r_\delta}(\delta z) = &  \sum_{d \mid  N} \left( \sum_{c \mid  N} \frac{(-1)^{\omega(d)+\omega(c)} }{\prod_{p \mid 2N} (p^{2k}-1)} \left( \frac{ 2N\gcd(c,d)}{c}  \right)^{2k} \prod_{ \delta \mid N} \left(\frac{\gcd(c,\delta)}{\delta}\right)^{4 r_{\delta}}  \right) E_{2k}(2dz) \nonumber \\
&+\sum_{ {d \mid N}} \left( \sum_{c \mid  N} \frac{(-1)^{\omega(d)+\omega(2c)} }{\prod_{p \mid 2N} (p^{2k}-1)} \left( \frac{ N\gcd(c,d)}{c}  \right)^{2k} \prod_{ \delta \mid N} \left(\frac{\gcd(c,\delta)}{\delta}\right)^{4 r_{\delta}}  \right) E_{2k}(dz+1/2) \nonumber \\
&+ C_{2N}\left(\prod_{ \delta \mid N} g^{8r_\delta}(\delta z+1/2);z \right).  \label{eq:27}
\end{align} }%
We compare coefficients of $q^n$ on both sides of (\ref{eq:27}) to obtain the formula for $N(1^{8r_1},\ldots,N^{8r_N};n)$ given by {\rm (\ref{eq:40})}.
\end{theorem}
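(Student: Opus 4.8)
The plan is to derive Theorem \ref{th:9} directly from Theorem \ref{th:8} by the substitution $z \mapsto z + 1/2$, and then read off (\ref{eq:40}) by comparing Fourier coefficients at $\infty$. First I would record the elementary facts that make the substitution tractable. Since $g(z) = \prod_{n\ge 1}(1-q^n)/(1+q^n)$ has period $1$, and each $\delta \mid N$ is odd, we get $g(\delta(z+1/2)) = g(\delta z + \delta/2) = g(\delta z + 1/2) = \varphi(\delta z)$, using the cited identity $\varphi(z)=g(z+1/2)$ and the fact that $\delta/2$ and $1/2$ differ by the integer $(\delta-1)/2$. Hence the left-hand side of the identity in Theorem \ref{th:8}, under $z \mapsto z+1/2$, becomes exactly $\prod_{\delta \mid N}\varphi^{8r_\delta}(\delta z)$.

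Next I would handle the Eisenstein terms. Because $N$ is odd, the sum $\sum_{d\mid 2N}$ splits into the odd divisors $d \mid N$ and the even divisors $d = 2d'$ with $d' \mid N$. Using that $E_{2k}$ has period $1$, so that $E_{2k}(w + d/2) = E_{2k}(w)$ when $d$ is even while $E_{2k}(w + d/2) = E_{2k}(w + 1/2)$ when $d$ is odd, I would rewrite $E_{2k}(d(z+1/2))$ as $E_{2k}(2d'z)$ in the even case and as $E_{2k}(d'z+1/2)$ in the odd case. Then I would simplify the coefficients via the arithmetic identities $\omega(2m) = \omega(m)+1$, $\gcd(2c,2d)=2\gcd(c,d)$, and, for odd $d$ and $\delta$, $\gcd(2c,d)=\gcd(c,d)$ and $\gcd(2c,\delta)=\gcd(c,\delta)$; reindexing the even divisors by $d' \mapsto d$ produces exactly the two coefficient blocks in (\ref{eq:27}), and the cusp term becomes $C_{2N}(\prod_\delta g^{8r_\delta}(\delta z+1/2))$. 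This establishes (\ref{eq:27}) as an identity of holomorphic functions, equivalently of $q$-series at $\infty$.

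Finally I would compare $q^n$ coefficients for $n>0$. On the left, $\prod_\delta \varphi^{8r_\delta}(\delta z) = \sum_n N(1^{8r_1},\ldots,N^{8r_N};n)q^n$ by the generating-function identity for $\varphi$. On the right I would use $[n]E_{2k}(2dz) = -\tfrac{4k}{B_{2k}}\sigma_{2k-1}(n/2d)$ and the key half-integer shift $[n]E_{2k}(dz+1/2) = -\tfrac{4k}{B_{2k}}(-1)^n\sigma_{2k-1}(n/d)$ for odd $d$ (here $(-1)^{n/d}=(-1)^n$ since $d$ is odd and $d\mid n$), together with $[n]C_{2N}(\prod_\delta g^{8r_\delta}(\delta z+1/2)) = (-1)^n[n]C_{2N}(\prod_\delta g^{8r_\delta}(\delta z))$. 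Absorbing the extra sign from $\omega(2c)=\omega(c)+1$ in the odd block, and the sign in front of $4k/B_{2k}$, then matches (\ref{eq:40}) line by line.

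The main obstacle is purely the bookkeeping of parity and sign: I must keep straight which of the two mechanisms — the integer versus the half-integer translation in $E_{2k}(d(z+1/2))$ — applies to each divisor $d$, and ensure that the signs $(-1)^{\omega(d)+\omega(2c)}$, the factor $(-1)^n$ coming from the half-integer shift, and the sign of the Eisenstein Fourier coefficient combine correctly. There is no analytic difficulty, since everything is an equality of absolutely convergent $q$-series; the one point to verify with care is that $g(\delta z + \delta/2) = \varphi(\delta z)$ genuinely holds for every $\delta \mid N$, which rests on the oddness of $\delta$ together with the exact period of $g$.
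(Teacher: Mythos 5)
Your proposal is correct and follows exactly the paper's route: the paper proves Theorem \ref{th:9} by the single remark ``Replacing $z$ by $z+1/2$ in Theorem \ref{th:8},'' relying on $\varphi(z)=g(z+1/2)$, and then reads off (\ref{eq:40}) by comparing $q^n$-coefficients. You simply make explicit the bookkeeping the paper leaves implicit (splitting $d\mid 2N$ into odd and even divisors, the periodicity of $E_{2k}$, and the parity identities such as $(-1)^{n/d}=(-1)^n$ for odd $d\mid n$), all of which checks out.
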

Now we illustrate Theorem \ref{th:9}, for the case $N=p$, $p$ a prime. Let $r_1,r_p \in \mathbb{N}_0$ and $k=2 (r_1+r_p) $. Then we have
\begin{align}
N(1^{8r_1},p^{8r_p};n) = & \frac{4k}{B_{2k}} \left( \frac{ (-1)^n(p^{4r_1} -1 )}{(2^{2k}-1)(p^{2k}-1)}   \sigma_{2k-1}(n) - \frac{2^{2k} (p^{4r_1}-1)}{(2^{2k}-1)(p^{2k}-1)} \sigma_{2k-1}(n/2) \right. \nonumber \\
&   \left. + \frac{ (-1)^n (p^{2k}- p^{4r_1} )}{(2^{2k}-1)(p^{2k}-1)} \sigma_{2k-1}(n/p) - \frac{ 2^{2k} (p^{2k} - p^{4r_1})}{(2^{2k}-1)(p^{2k}-1)}  \sigma_{2k-1}(n/{2p}) \right) \nonumber \\
& + [n] C_{2p}\left(\prod_{ \delta \mid p} g^{8r_\delta}(\delta z+1/2);z \right). \label{eq:39}
\end{align}
Letting $r_1=r_p=l$ in (\ref{eq:39}), we obtain
{\begin{align}
 N(1^{8l},p^{8l};n) =&  \frac{16l}{B_{8l}} \left(\frac{  { (-1)^n} \left(\sigma_{8l-1}(n)  + p^{4l} \sigma_{8l-1}(n/p) \right)- {2^{8l} } \left( \sigma_{8l-1}(n/2)  +p^{4l} \sigma_{8l-1}(n/{2p})\right) }{(2^{8l}-1)(p^{4l}+1)} \right) \nonumber \\
& + [n] C_{2p}\left(\prod_{ \delta \mid p} g^{8l}(\delta z+1/2);z \right). \label{eq:38}
\end{align}}%
Recently in \cite{cooperrmf}, Cooper, Kane and Ye obtained formulas for $N(1^{k},p^{k};n)$, valid for all $k \in \mathbb{N}$ and $p=3,7,11,23$. For $t \in \{1,p \}$, we have
{ \begin{align}
(-1)^{n} \sigma_{8l-1}(n/t) - 2^{8l} \sigma_{8l-1}(n/{2t}) = -\sigma_{8l-1}(n/t) + 2 \sigma_{8l-1}(n/{2t}) -2^{8l} \sigma_{8l-1}(n/{4t}). \label{eq:37}
\end{align}}%
We use (\ref{eq:37}) to show that the Eisenstein parts of the formula in \cite{cooperrmf} and of (\ref{eq:38}) agrees when $k=8l$. Our formula is valid for all primes, but fails when $ 8 \nmid k $.


We let $p=3$, $r_1=k$ and $r_3=0$ in (\ref{eq:39}), then by Theorem \ref{th:6} and (\ref{eq:39}) we have
{\footnotesize \begin{align*}
N(1^{8k},3^{0};n) =  \frac{-4k}{B_{2k}} \left( \frac{ (-1)^{n+1}}{2^{4k}-1}   \sigma_{4k-1}(n) + \frac{2^{4k}}{2^{4k}-1} \sigma_{4k-1}(n/2) \right)+ \sum_{i=1}^{4k-3} (-1)^n b_i [n]S(4k,6,i;z),
\end{align*}}
which, after an algebraic manipulation, with an equation similar to (\ref{eq:37}), agrees with the Ramanujan-Mordell formula, see  \cite{cooper,mordell,19ramanujanocaf}. 

In general Theorem \ref{th:1} can be applied to any eta quotient in $M_{2k}(\Gamma_0(N))$. The statement is as follows.
\begin{theorem} \label{th:10}
Let $N\in \mathbb{N}$ be square-free and $k>1$ be an integer. Let $r_\delta \in \mathbb{Z}$ $(\delta \mid N)$, not all zero. If $f(z) =\prod_{\delta \mid N} \eta^{r_\delta} (\delta z) \in M_{2k}(\Gamma_0(N))$ then for $n>0$, we have
{\footnotesize \begin{align*}
[n]f=&\frac{-4k}{B_{4k}}\sum_{d \mid N}  \left( \sum_{\substack{c \mid N, \\ v_c(f)=0}} \frac{(-1)^{\omega(d)+\omega(c)} }{\prod_{p \mid N} (p^{2k}-1)} \left( \frac{ N\gcd(c,d)}{c}  \right)^{2k}  \prod_{\delta \mid N} \frac{\nu(r_\delta,c) {\gcd(c,\delta)}^{r_\delta/2}}{\delta^{r_{\delta}/2}}  \right) \sigma_{2k-1}(n/d)\\
&+[n]C_N(f;z),
\end{align*}}%
where $\nu(r_\delta,c) \in \cc$ is a norm $1$ constant which can be determined by \cite[Theorem 1.7]{Kohler}.
\end{theorem}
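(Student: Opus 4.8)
The plan is to specialise the Eisenstein coefficient formula (\ref{eq:32}) of Theorem \ref{th:1} to the eta quotient $f(z)=\prod_{\delta \mid N}\eta^{r_\delta}(\delta z)$. Since $f(z)\in E_{2k}(\Gamma_0(N))$ by hypothesis, (\ref{eq:32}) applies directly, and the entire content of the statement reduces to evaluating the first Fourier coefficient $[0]_c f$ at each cusp $1/c$ ($c \mid N$). Once these are in hand, I would substitute them into (\ref{eq:32}); the terms for which $[0]_c f = 0$ drop out of the inner sum, which is exactly the restriction $v_c(f)=0$ appearing in the statement.

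First I would compute $[0]_c f$ from the transformation behaviour of $\eta$ at the cusp $1/c$. Using the eta transformation formula recorded in \cite[Proposition 2.1]{Kohler} (the same tool used in the proof of Theorem \ref{th:8}), each factor $\eta^{r_\delta}(\delta z)$ expanded at $1/c$ has leading coefficient $\nu(r_\delta,c)\,\bigl(\gcd(c,\delta)/\delta\bigr)^{r_\delta/2}$, where $\nu(r_\delta,c)$ is the root-of-unity factor coming from the multiplier system. Taking the product over $\delta \mid N$ and recalling that $v_c(f)$ is the smallest index with $[n]_c f \neq 0$, the constant term is
\begin{align*}
[0]_c f = \prod_{\delta \mid N}\frac{\nu(r_\delta,c)\,\gcd(c,\delta)^{r_\delta/2}}{\delta^{r_\delta/2}}
\end{align*}
precisely when $v_c(f)=0$, and $[0]_c f = 0$ whenever $v_c(f)>0$.

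Substituting these values into (\ref{eq:32}) then gives the theorem: the contributions from cusps with $v_c(f)>0$ vanish, so the inner sum over $c \mid N$ collapses to a sum over $\{\, c \mid N : v_c(f)=0 \,\}$, and replacing each surviving $[0]_c f$ by the product above produces exactly the displayed formula.

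The main obstacle is the cusp-expansion bookkeeping: one must track both the order $v_c(f)$ (to know whether $[0]_c f$ survives) and the precise leading coefficient, including the multiplier-system factor $\nu(r_\delta,c)$, all while matching the width and normalisation conventions for $[n]_c$ fixed in Section \ref{sec:1} and used in Theorem \ref{th:3}. Everything after that evaluation is a routine substitution into (\ref{eq:32}).
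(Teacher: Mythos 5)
Your proposal is correct and follows exactly the route the paper intends: the paper states Theorem \ref{th:10} as a direct consequence of (\ref{eq:32}), with the inner sum restricted to cusps where $v_c(f)=0$ because $[0]_cf$ vanishes otherwise, and with $[0]_cf$ evaluated as the product of the leading coefficients $\nu(r_\delta,c)\bigl(\gcd(c,\delta)/\delta\bigr)^{r_\delta/2}$ of the factors $\eta^{r_\delta}(\delta z)$ at $1/c$ via the eta transformation formula of \cite[Proposition 2.1]{Kohler}. No gap; this matches the paper's (unwritten but clearly intended) argument.
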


Note that if $C_N(f;z)=0$ then $f(z)\in E_{2k}(\Gamma_0(N))$, thus Theorem \ref{th:10} determines Fourier coefficients of the eta quotient $f(z)$. One can use Theorem \ref{th:5}, (\ref{eq:43}) below and pigeonhole principle to show that there are only finitely many eta quotients in $ \bigcup_{k \in \mathbb{N}} E_{2k}(\Gamma_0(N))$ for each $N \in \mathbb{N}$ square-free. We further believe, disregarding the repetitions, the following well-known equations are the only such examples:
\begin{align}
& \displaystyle \frac{\eta^{16}(z)}{\eta^{8}(2z)}=\displaystyle 1+\sum_{n>0}(-16\sigma_3(n)+256\sigma_3(n/2))q^{n}, \label{eq:44} \\
& \displaystyle \frac{\eta^{16}(2z)}{\eta^{8}(z)}=\displaystyle \sum_{n>0}(\sigma_3(n)-\sigma_3(n/2))q^{n}. \label{eq:45}
\end{align}
Below we try to explain the reason of this. Before we start, note that there are two types of repetitions, first if $f(z) \in E_{2k}(\Gamma_0(N))$, then we also have $f(z) \in E_{2k}(\Gamma_0(N'))$ for any $N \mid N'$. Second, if we have $f(z) \in E_{2k}(\Gamma_0(N))$, then $f(dz) \in E_{2k}(\Gamma_0(dN))$. Note that it is slightly different from the concept of oldforms (which is defined for cusp forms). In the following arguments we assume $f(z)$ is not a repetition. Assuming $f(z) =\prod_{\delta \mid N} \eta^{r_\delta} (\delta z) \in E_{2k}(\Gamma_0(N))$, we have
\begin{align}
f(z)=\sum_{d \mid N } a_d E_{2k}(dz). \label{eq:41}
\end{align}
We compare both sides of (\ref{eq:41}) at different cusps using (\ref{eq:19}) and (\ref{eq:34}). The zeros of the eta quotient yield to the equations
\begin{align}
0=[n]_c\sum_{d \mid N } a_d E_{2k}(dz) \mbox{, for $n < v_{c}(f)$   } . \label{eq:46}
\end{align}
On the other hand, for sum of orders of zeros of $f(z)$, we have
{ \begin{align}
S(k,N)=\sum_{r \in R(N)} v_{1/r}(f(z)) & =\sum_{c \mid N} \frac{N}{24\gcd(c^2,N)} \sum_{\delta \mid N} \frac{\gcd(c,\delta)^2 r_{\delta}}{ \delta} = \frac{k}{6} \prod_{p \mid N} (p+1), \label{eq:43}
\end{align}}%
see \cite[(4.2.9)]{ayginthesis} for the details, we also have the number of inequivalent cusps of $\Gamma_0(N)$ is $\vert R(N) \vert = \sigma_0(N)$. It appears, if $S(k,N) > \vert R(N) \vert$, then the number of linearly independent equations coming from (\ref{eq:46}) which equal to zero are equal to the number of variables $a_d$. This forces $f(z)=0$. We have $S(k,N) \leq \vert R(N) \vert$ for the couples $(k,N)=$ $(2,1)$, $(2,2)$, $(2,3)$, $(2,5)$, $(2,6)$, $(3,1)$, $(3,2)$, $(3,3)$, $(4,1)$, $(4,2)$, $(5,1)$, $(5,2)$. Finally, we use similar arguments used in \cite{alacaaygin2015} to find out (\ref{eq:44}) and (\ref{eq:45}) are the only eta quotients in spaces corresponding to the couples above.


\section{On non-square-free level modular forms} \label{sec:8}

In this section we describe how to determine the Eisenstein terms of any modular form in integer weight spaces. Serre's result \cite{SerreD} (or see \cite[p. 300]{onodistr}) is a widely used tool to derive arithmetic properties of modular forms. However this requires the modular form to be a cusp form. Below we explain how to strip the modular forms from their Eisenstein part to determine their cusp part. Then using arithmetic properties of Eisenstein series and Serre's result one can derive arithmetic properties of modular forms (not necessarily a cusp form).

Let $M_{k}(\Gamma_0(N),\chi)$ be the space of integer weight modular forms for level $N$ (not necessarily square-free) with multiplier $\chi$. It is well-known that the basis of $E_{k}(\Gamma_0(N),\chi)$ can be given by the non-normalized Eisenstein series
\begin{align}
E_{k}(Mdz; \epsilon,\psi)=\sum_{\substack{m,n=-\infty,\\m,n\neq 0}}^{\infty} \frac{\epsilon(m)\psi(n)}{(Mdmz+n)^k}, \label{eq2:1}
\end{align}
where $d \mid \frac{N}{LM}$; and $\epsilon$ and $\psi$ are primitive Dirichlet characters with conductors $L$ and $M$, respectively, such that $LM \mid N$, $\psi \epsilon=\chi$ and $\psi \epsilon(-1)=(-1)^k$, see \cite[Ch. 7]{miyake} or \cite[Theorem 5.9]{stein} for details. To give the equivalent of Theorem \ref{th:1} for $f(z) \in M_{k}(\Gamma_0(N),\chi)$ one needs to compute the first terms of $E_{k}(Mdz; \epsilon,\psi)$ at the cusps of $\Gamma_0(N)$, and follow the arguments of the proof of the main theorem. Below we explain how to compute the first term of Fourier expansion of $E_{k}(Mdz; \epsilon,\psi)$ at cusp $r=\frac{\alpha}{\gamma}$ with $\gcd(\alpha,\gamma)=1$. Then there exist $ \beta,  \delta \in \mathbb{Z}$ such that $\alpha  \delta - \beta   \gamma =1$, i.e. we have  $R=\begin{bmatrix}    \alpha       & \beta  \\     \gamma     & \delta  \end{bmatrix} \in SL_2(\mathbb{Z})$. Hence the Fourier series expansion of $E_{k}(Mdz; \epsilon,\psi)$ at cusp $r$ is given by the Fourier series expansion of $E_{k}(Md R(z); \epsilon,\psi)$ at cusp $\infty$. When $k>2$ right hand side of (\ref{eq2:1}) is convergent, thus we have
\begin{align*}
E_{k}(MdR(z); \epsilon,\psi)&=\sum_{\substack{m,n=-\infty,\\ m,n\neq 0}}^{\infty} \frac{\epsilon(m)\psi(n)}{(MdmR(z)+n)^k}, \\
&=(\gamma z + \delta)^k \sum_{\substack{m,n=-\infty,\\m,n\neq 0}}^{\infty} \frac{\epsilon(m)\psi(n)}{((Mdm \alpha + \gamma n)z+(Mdm\beta +\delta n))^k}.
\end{align*}
Assuming the cusp width is $h$ and cusp parameter is $\kappa$, $E_{k}(MdR(z); \epsilon,\psi)$ has an expansion of the form $(\gamma z + \delta)^k \sum_{j \geq 0} a_j e^{2 \pi i (j+\kappa)z/h}$, see \cite[(1.18)]{Kohler}. Thus the contribution to the $a_0$ comes from the terms with $Mdm \alpha + \gamma n=0$, that is, for $k>2$, we have
\begin{align}
a_0&= \sum_{\substack{m,n=-\infty,\\m,n\neq 0, \\Mdm \alpha + \gamma n=0} }^{\infty} \frac{\epsilon(m)\psi(n)}{((Mdm \alpha + \gamma n)z+(Mdm\beta +\delta n))^k} \nonumber\\
&= \left( \frac{\gcd(Md,\gamma)}{Md}\right)^k \sum_{\substack{n=-\infty,\\ n\neq 0}}^{\infty} \frac{\epsilon \left(\frac{-\gamma }{\gcd(Md,\gamma)}n\right)\psi \left(\frac{Md \alpha}{\gcd(Md,\gamma)}n \right)}{ n^k}. \label{eq2:2}
\end{align}
This gives an equivalent of (\ref{eq:19}) for $E_{k}(Mdz; \epsilon,\psi)$. We then use these terms to form a set of linear equations as in (\ref{eq:11}) and solve it to obtain a statement equivalent to Theorem \ref{th:1} for $M_{k}(\Gamma_0(N),\chi)$. Noting that the expansion of $E_{k}(Mdz; \epsilon,\psi)$ at infinity is given by \cite[Theorem 7.1.3]{miyake}, the solution yields to a formula for the Eisenstein part of a modular form $f(z) \in M_{k}(\Gamma_0(N),\chi)$ with $2<k \in \mathbb{N}$ and $N \in \mathbb{N}$ not necessarily a square-free number.

Please note that the first terms of $E_{2k}(dz)$ at the cusps ${1}/{c}$ which can also be obtained by the above arguments agrees with the ones from Theorem \ref{th:3}. Theorem \ref{th:3} additionally gives all the Fourier coefficients of $E_{2k}(dz)$ at the cusps ${1}/{c}$, which is needed for some previous discussions. 

 Let $p>3$ be prime and $k=\frac{p-1}{2}$. Let  $\chi_p = \left( \frac{(-1)^kp}{*}\right)$ be the Kronecker symbol and  $\chi_1$ denote the primitive principal character. As an example we work on the eta quotient $\displaystyle \frac{\eta^p(pz)}{\eta(z)} \in M_k(\Gamma_0(p),\chi_p)$, a function which plays important role in the theory of partition functions. One can use the arguments above, \cite[Proposition 2.1]{Kohler}, and a well-known equation for the case $p=5$ to prove that there exists a cusp form $C_p(z) \in S_k(\Gamma_0(p),\chi_p)$ such that
\begin{align*}
\frac{\eta^p(pz)}{\eta(z)}=\frac{(-1)^k e^{2 \pi i (p-1)/8}}{2 L(k,\chi_p)} E_k(z;\chi_p,\chi_1) + C_p(z),
\end{align*}
where $L(k,\chi_p)$ is the Dirichlet L-function associated with $\chi_p$. Then we use the series expansion of $E_k(z;\chi_p,\chi_1)$ given by \cite[Theorem 7.1.3]{miyake} to obtain
\begin{align*}
\frac{\eta^p(pz)}{\eta(z)}= \frac{-(-1)^{\left \lfloor k/2 \right \rfloor }2k}{pB_{k,\chi_p}} \left( \sum_{n \geq 1} \sum_{d \mid n} \chi_p(n/d)) d^{k-1} q^n \right) + C_p(z),
\end{align*}
where $B_{k,\chi_p}$ is the generalized Bernoulli number, for which we have $\gcd(p,pB_{k,\chi_p})=1$, see \cite[Theorem 3]{carlitz}. It is easy to see $C_5(z)=0$ and $C_p \neq 0$ for $p>5$. For $p=5$ we have 
\begin{align*}
[n] \frac{\eta^5(5z)}{\eta(z)} \equiv 0 \pmod{5}, \mbox{ for $5 \mid n$}.
\end{align*}
For $p>5$, $p \mid n$ implies that $p \mid \sum_{d \mid n} \chi_p(n/d) d^{k-1}$. Hence we have
\begin{align}
[n] \frac{\eta^p(pz)}{\eta(z)} \equiv [n] C_p(z) \pmod{p}, \mbox{ for $p \mid n$}. \label{eq3:1}
\end{align}
This gives an alternative proof of $\displaystyle \sum_{n \geq 1}[np]\frac{\eta^p(pz)}{\eta(z)} q^{n} $ being equivalent to a cusp form with $v_{0} > p/24$ modulo $p$, one of the key arguments in Ono's celebrated paper `Distribution of the partition function modulo $m$', see \cite[Section 3]{onodistr}. 

\section*{Acknowledgments} 
I would like to thank Song Heng Chan for his helpful comments throughout the course of this research, and I am grateful to Heng Huat Chan, whose feedback helped to give a more elegant proof of Lemma \ref{lemma:1}. I am also indebted to Kenneth S. Williams for his encouraging remarks and corrections on an earlier version of this work. The author was supported by the Singapore Ministry of Education Academic Research Fund, Tier 2, project number MOE2014-T2-1-051, ARC40/14.

\bibliographystyle{amsplain}

\end{document}